\newtheorem{theorem}{Theorem}
\newtheorem{proposition}{Proposition}[section]
\newtheorem{corollary}[proposition]{Corollary}
\newtheorem{lemma}[proposition]{Lemma}
\newcommand{\triple}[1]{{\left\vert\kern-0.25ex\left\vert\kern-0.25ex\left\vert #1 
    \right\vert\kern-0.25ex\right\vert\kern-0.25ex\right\vert}}
\newcommand{\I}{\mathcal{I}}
\newcommand{\threebar}[1]{{\left\vert\kern-0.25ex\left\vert\kern-0.25ex\left\vert #1 
    \right\vert\kern-0.25ex\right\vert\kern-0.25ex\right\vert}}
\newcommand{\R}{\mathbb{R}}
\newcommand{\A}{\mathsf{A}}
\newcommand{\C}{\mathbb{C}}
\newcommand{\B}{\mathsf{B}}
\newcommand{\N}{\mathbb{N}}
\newcommand{\Z}{\mathbb{Z}}
\title[Stability of two-dimensional discrete linear switching systems]{A stability dichotomy for discrete-time linear switching systems in dimension two}
\author{Ian D. Morris}
\address{I. D. Morris: School of Mathematical Sciences, Queen Mary University of London, Mile End Road, London E1 4NS, United Kingdom}
\email{i.morris@qmul.ac.uk }
\begin{document}

\begin{abstract}
We prove that for every discrete-time linear switching system in two complex variables and with finitely many switching states, either the system is Lyapunov stable or there exists a trajectory which escapes to infinity with at least linear speed. We also give a checkable algebraic criterion to distinguish these two cases. This dichotomy was previously known to hold for systems in two real variables, but is known to be false in higher dimensions and for systems with infinitely many switching states.
\end{abstract}
\maketitle

\section{Introduction}
If $\A=\{A_i \colon i \in \I\}$ is a bounded set of $d\times d$ real or complex matrices, a trajectory of the \emph{discrete-time linear switching system} defined by $\A$ is a sequence of vectors $(v_n)_{n=0}^\infty$ such that $v_{n+1}=A_{\sigma(n)}v_n$ for all $n \geq 0$. Here the sequence $\sigma \colon \N \to \I$ is referred to as the \emph{switching law} and $v_0$ the \emph{initial vector}.  The elements of $\A$ are called the \emph{switching states} of $\A$ and in most cases we will assume them to be finite in number. Clearly we may write $v_n=A_{\sigma(n)}\cdots A_{\sigma(1)}v_0$ for every $n \geq 1$ and we will prefer this formulation in the sequel. These systems arise as the discrete-time analogues of the more widely studied continuous-time linear switching system, which is the differential equation $v'(t)=A(t)v(t)$ a.e. where $A\colon [0,\infty) \to \A$ is measurable and $v \colon [0,\infty) \to \R^d$ is Lipschitz continuous. 

This note is concerned with the worst-case stability properties of discrete-time linear switching systems under arbitrary switching laws. Given a bounded nonempty set of $d \times d$ matrices $\A=\{A_i \colon i \in \I\}$ one may distinguish four distinct stability regimes (as noted in, for example, \cite{ChMaSi12,Su08}):
\begin{enumerate}[1)]
\item
\emph{Exponential stability:} there exist $C>0$ and $\kappa>0$ such that for every initial vector $v_0$ and switching law $\sigma$,
\[\left\|A_{\sigma(n)} \cdots A_{\sigma(1)}v_0\right\| \leq Ce^{-\kappa n}\left\|v_0\right\|\]
for every $n \geq 0$.
\item
\emph{Exponential instability:} there exist $C>0$, $\kappa>0$, an initial vector $v_0$ and a switching law $\sigma$ such that 
\[\left\|A_{\sigma(n)} \cdots A_{\sigma(1)}v_0\right\| \geq Ce^{\kappa n}\left\|v_0\right\|\]
for every $n \geq 0$.
\item
\emph{Marginal stability:}  there exists $C>0$ such that for every initial vector $v_0$ and switching law $\sigma$,
\[\left\|A_{\sigma(n)} \cdots A_{\sigma(1)}v_0\right\| \leq C \left\|v_0\right\|,\]
but exponential stability does not hold. In this case there necessarily exists a trajectory which does not converge to the origin.
\item
\emph{Marginal instability:} the system is not exponentially unstable, but there exists an unbounded trajectory.
\end{enumerate}
The four categories are mutually exclusive and every $\A$ belongs to exactly one of the four categories. We remark that the system is Lyapunov stable if and only if it is either exponentially stable or marginally stable. It is by now well known (see for example \cite{Ju09}) that exponential stability and exponential instability are characterised by the \emph{joint spectral radius} of $\A$,
\[\varrho(\A):=\lim_{n \to \infty}\sup_{i_1,\ldots,i_n \in \I} \left\|A_{i_n}\cdots A_{i_1}\right\|^{\frac{1}{n}} = \inf_{n \geq 1}\sup_{i_1,\ldots,i_n \in \I} \left\|A_{i_n}\cdots A_{i_1}\right\|^{\frac{1}{n}}.\]
Specifically, if $\varrho(\A)<1$ then exponential stability holds, and if $\varrho(\A)>1$ then exponential instability holds. If $\varrho(\A)=1$ the system is therefore either marginally stable or marginally unstable, but distinguishing the two cases in practice has proven quite difficult (see for example \cite{BlTs00,Mo22}). The separate problem of computing the joint spectral radius itself forms the subject of a substantial body of research: see for example \cite{GuPr13,HaMoSiTh11,Ju09,VaHeJu14,WaMaMa21} and references therein. 

The above four definitions leave open the question of what the rate of growth of trajectories of a  marginally unstable system can actually be. Given a bounded nonempty set of $d\times d$ matrices $\A=\{A_i \colon i \in \I\}$ such that $\varrho(\A)=1$, one may define the \emph{rate of marginal instability} to be the sequence
\[n \mapsto \sup_{\sigma \colon \N \to \I} \sup_{\|v_0\|=1} \left\|A_{\sigma(n)} \cdots A_{\sigma(1)}v_0\right\|\]
or more simply
\[n \mapsto \sup_{i_1,\ldots,i_n\in \I} \left\|A_{i_n} \cdots A_{i_1}\right\|\]
which is the best possible uniform upper bound on the growth of trajectories whose initial vector lies in the unit ball. If $\A$ has only one switching state $A$ then the above sequence is simply the sequence $n \mapsto \|A^n\|$, and it is a simple consequence of the Jordan form theorem that (assuming $\varrho(\A)=1$) we have $\|A^n\| \sim n^k$ for some non-negative integer $k<d$. Outside of this special case the behaviour of the rate of marginal instability is rather less clear, and a by-now significant body of work has attempted to characterise its behaviour (see for example \cite{Be05,BeCoHa16,ChMaSi12,GuZe01,JuPrBl08,Mo22,PrJu15,VaMo22}). In general the problems both of distinguishing marginal stability from marginal instability, and understanding the range of possible behaviours of the rate of marginal instability, remain wide open. 

The purpose of this note is to demonstrate that in the specific context of two-dimensional linear switching systems over $\C$ with finitely many switching states, both of these two questions have straightforward answers. In the case of such a system which is known to be either marginally stable or marginally unstable, we will show that the two possibilities can be distinguished from one another by a simple algebraic criterion. Moreover, in the case where the system is marginally unstable, we will show that its rate of marginal instability is asymptotically bounded above and below by a linear sequence.
\begin{theorem}\label{th:main}
Let $\A=\{A_1, A_2,\ldots,A_N\}$ be a finite set of $2 \times 2$ complex matrices such that $\varrho(\A)=1$. Then: 
\begin{enumerate}[(i)]
\item
If the matrices $A_1,\ldots,A_N$ do not share a common eigenvector, or if the set $\{ A_j \colon |\det A_j|=1\}$ is simultaneously diagonalisable, then there exists a constant $C>0$ such that for every switching sequence $\sigma \colon \N \to \{1,\ldots,N\}$ and vector $v \in \C^2$ we have $ \|A_{\sigma(n)}\cdots A_{\sigma(1)}v\| \leq C\|v\|$ for all $n \geq 1$.
\item
If the matrices $A_1,\ldots,A_N$ share a common eigenvector and additionally the set $\{ A_j \colon |\det A_j|=1\}$ is \emph{not} simultaneously diagonalisable then there exist a switching sequence $\hat\sigma \colon \N \to \{1,\ldots,N\}$ and a nonzero vector $v \in \C^2$
such that
\begin{equation}\label{eq:liminf}\liminf_{n \to \infty} \frac{1}{n} \|A_{\hat\sigma(n)}\cdots A_{\hat\sigma(1)}v\| >0.\end{equation}
The sequence $\hat\sigma$ may be chosen so that $\{A_{\hat\sigma(n)} \colon n \geq 1\}$ is not simultaneously diagonalisable, such that $|\det A_{\hat\sigma(n)}|=1$ for every $n \geq 1$, and such that $\hat\sigma$ either is constant or takes exactly two distinct values. Furthermore for all switching sequences $\sigma \colon \N \to \{1,\ldots,N\}$ and vectors $v \in \C^2$,
\[\|A_{\sigma(n)}\cdots A_{\sigma(1)}v\| \leq \left(1+n\cdot \max_j \|A_j\|\right)\|v\|\]
for all $n \geq 1$.
\end{enumerate}
\end{theorem}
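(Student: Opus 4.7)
The proof splits naturally along the dichotomy. Let me handle part (i) first, then part (ii); throughout, I use that $\varrho(\mathcal A)=1$ combined with $|\det A_j|=1$ forces every $A\in\mathcal B:=\{A_j:|\det A_j|=1\}$ to have both eigenvalues on the unit circle, since each eigenvalue of $A$ has modulus at most $\rho(A)\leq\varrho(\mathcal A)=1$ while their product has modulus one.

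For part (i), if no common eigenvector exists then $\mathcal A$ acts irreducibly on $\mathbb C^2$ (the only proper invariant subspaces of $\mathbb C^2$ are one-dimensional, hence common eigenlines), and the classical extremal-norm theorem (Rota--Strang/Barabanov) with $\varrho(\mathcal A)=1$ supplies a norm in which each $A_j$ is non-expansive. If instead the matrices share a common eigenvector and $\mathcal B$ is simultaneously diagonalisable, I would change basis so that the shared eigenvector becomes $e_1$ \emph{and} every element of $\mathcal B$ is diagonal; this is possible because $\mathcal B$ admits a common eigenbasis, and any non-scalar element of $\mathcal B$ forces the shared eigenvector to coincide with one of its basis eigenvectors (while if every element of $\mathcal B$ is scalar any basis containing the shared eigenvector works). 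In this basis each $A_j=\begin{pmatrix}\lambda_j&a_j\\0&\mu_j\end{pmatrix}$ satisfies $|\lambda_j|,|\mu_j|\leq 1$ and $a_j=0$ whenever $|\lambda_j\mu_j|=1$, so setting $\eta:=\max\{|\lambda_j\mu_j|:a_j\neq 0\}<1$ one has $\min(|\lambda_{\sigma(k)}|,|\mu_{\sigma(k)}|)\leq\sqrt\eta$ at every index $k$ with $a_{\sigma(k)}\neq 0$. The product $M_n:=A_{\sigma(n)}\cdots A_{\sigma(1)}$ is upper triangular with diagonal entries of modulus $\leq 1$, and its off-diagonal entry equals $\sum_{k:a_{\sigma(k)}\neq 0}(\prod_{l>k}\lambda_l)\,a_{\sigma(k)}\,(\prod_{l<k}\mu_l)$; splitting this sum according to whether $|\lambda_{\sigma(k)}|\leq\sqrt\eta$ or $|\mu_{\sigma(k)}|\leq\sqrt\eta$ and enumerating each subset in order of $k$ reduces the estimate to two telescoping geometric series of common ratio $\sqrt\eta$, each of total size $\leq\max_j|a_j|/(1-\sqrt\eta)$, giving the uniform bound.

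For part (ii) the upper bound follows directly from the same off-diagonal sum, which has at most $n$ terms each of modulus $\leq\max_j\|A_j\|$, together with the fact that the diagonal entries have modulus $\leq 1$. For the lower bound, if some $A\in\mathcal B$ is non-diagonalisable then both its eigenvalues equal a common $\zeta$ of unit modulus, so $A=\zeta(I+N)$ for a nonzero nilpotent $N$, and the constant sequence $\hat\sigma\equiv A$ with any $v\notin\ker N$ gives $\|A^nv\|=\|v+nNv\|\sim n\|Nv\|$. Otherwise every element of $\mathcal B$ is diagonalisable, and the failure of simultaneous diagonalisability supplies $A,B\in\mathcal B$ whose second eigenvectors differ; after changing basis so that the shared eigenvector is $e_1$ and $A=\operatorname{diag}(\alpha,\beta)$, the matrix $B=\begin{pmatrix}\gamma&b\\0&\delta\end{pmatrix}$ has $b\neq 0$ and $|\alpha|=|\beta|=|\gamma|=|\delta|=1$. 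I would then build $\hat\sigma$ as alternating blocks $A^{d_i}B$ with the exponents $d_i$ chosen from a bounded range so that the phase increment $d_i\arg(\beta/\alpha)+\arg(\delta/\gamma)$ (mod $2\pi$) between successive $B$-terms keeps the cumulative phase of the summands of $(M_n)_{12}$ inside a fixed sub-interval of $(-\pi/2,\pi/2)$; the existence of such bounded $d_i$ follows from the density of $\{d\arg(\beta/\alpha)\bmod 2\pi:d\in\mathbb N\}$ (Weyl equidistribution when $\arg(\beta/\alpha)/(2\pi)$ is irrational, or elementary counting when it is rational). The terms then have real parts bounded below by a positive constant, which yields $|(M_n)_{12}|\gtrsim n$ since each block has bounded length.

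The principal obstacle is executing this construction uniformly, especially in the degenerate subcase $\beta=-\alpha$, where $d\arg(\beta/\alpha)\bmod 2\pi\in\{0,\pi\}$ restricts the available phase adjustments to only two values and the simple alternating-block construction breaks down. In that regime $A^2$ is a scalar matrix, so only the parity of the $A$-exponents matters; the fix is likely to vary the $B$-block lengths $B^{m_i}$ instead, using that $|\det B|=1$ so that high powers of $B$ densely approximate $B^{-1}$, and then to synthesize a near-parabolic element of the semigroup whose iterates grow linearly.
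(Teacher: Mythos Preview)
Your treatment of part~(i) and of the linear upper bound in~(ii) is essentially the paper's, with one cosmetic difference worth noting: the stated constant $1+n\max_j\|A_j\|$ refers to the \emph{original} operator norm, so the triangularising change of basis must be unitary (take an orthonormal basis whose first vector spans the common eigenline), as the paper does; your sketch does not make this explicit.

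The substantive gap is in the lower-bound construction for~(ii). Your phase-steering via blocks $A^{d_i}B$ is exactly the mechanism the paper uses in its ``second case'', but that argument requires the orbit $\{d\arg(\beta/\alpha)\bmod 2\pi:d\in\mathbb N\}$ to be \emph{dense}, and your parenthetical ``elementary counting when it is rational'' does not deliver this: when $\beta/\alpha$ is a root of unity the orbit is finite, and for $q=2$ the two available phase increments are antipodal, so no choice of $d_i$ keeps the cumulative phase inside a proper sub-interval of $(-\pi/2,\pi/2)$. You correctly flag this obstacle, but the proposed fix (``vary $B$-block lengths \ldots\ synthesize a near-parabolic element'') is not a proof, and in particular fails outright when \emph{both} $\beta/\alpha$ and $\delta/\gamma$ have order two, since then $B^2$ is also scalar.

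The paper closes this gap by a clean trichotomy on whether the two diagonal ratios are roots of unity. After normalising so that the lower-right entries are~$1$ and writing $A_1=\left(\begin{smallmatrix}e^{i\phi}&e^{i\phi}-1\\0&1\end{smallmatrix}\right)$, $A_2=\left(\begin{smallmatrix}e^{i\psi}&0\\0&1\end{smallmatrix}\right)$: if \emph{both} $e^{i\phi},e^{i\psi}$ are roots of unity of orders $q_1,q_2$, then $A_1^{-1}=A_1^{q_1-1}$ and $A_2^{-1}=A_2^{q_2-1}$ lie in the semigroup, and a direct computation shows the group commutator
\[
A_1A_2A_1^{-1}A_2^{-1}=\begin{pmatrix}1&(1-e^{i\phi})(e^{i\psi}-1)\\0&1\end{pmatrix}
\]
is a nontrivial Jordan block, giving a periodic switching law with linear growth. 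If \emph{neither} is a root of unity, the density argument (your construction) applies. If exactly one is a root of unity, say $e^{i\psi}$, the paper replaces the pair $(A_1,A_2)$ by $(A_1,A_1A_2)$, which is still not simultaneously diagonalisable and now has neither diagonal ratio a root of unity, reducing to the previous case. This commutator trick is the missing ingredient; your ``near-parabolic element'' intuition is pointing at it, but the explicit identity above is what makes it work.
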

The proof of Theorem \ref{th:main} which we present is in principle constructive. In the cases where the system is marginally stable we are able to give explicit uniform upper bounds on the growth of trajectories, and in the marginally unstable case we construct the escaping trajectory by an explicit inductive procedure. The limit inferior \eqref{eq:liminf} may also be made explicit in principle. However, the precise values of these upper and lower bounds depend on constants induced by a change of basis for $\C^2$ and also depend on whether or not certain ratios of eigenvalues are roots of unity. These explicit results are thus relatively complicated to incorporate into the statement of Theorem \ref{th:main} and we therefore leave them in the form of individual propositions, listed in the next section, each of which corresponds to the proof of one of the major sub-cases of Theorem \ref{th:main}. We do not attempt to estimate the value of the limit inferior \eqref{eq:liminf} in this note.

The following consequence of Theorem \ref{th:main} is straightforward:
\begin{corollary}
Let $\B=\{B_1, B_2,\ldots,B_N\}$ be a finite set of $2 \times 2$ complex matrices and suppose that $\varrho(\B)\neq 0$. Define
\[b_n:=\max_{1 \leq i_1,\ldots,i_n \leq N} \|B_{i_1}\cdots B_{i_n}\|\]
for every $n\geq 1$. Then either the sequence $b_n/\varrho(\B)^n$ is bounded away from zero and infinity, or the sequence $b_n/(n\varrho(\B)^n)$ is bounded away from zero and infinity.
\end{corollary}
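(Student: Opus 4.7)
The plan is to reduce the corollary to Theorem \ref{th:main} via the rescaling $A_j := B_j/\varrho(\B)$. Writing $\A := \{A_1,\ldots,A_N\}$, one immediately has $\varrho(\A)=1$ and $b_n = \varrho(\B)^n\cdot a_n$, where $a_n := \max_{i_1,\ldots,i_n} \|A_{i_1}\cdots A_{i_n}\|$. The corollary is therefore equivalent to the dichotomy that either $a_n$ itself is bounded away from zero and infinity, or $a_n/n$ is.

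The infimum formula for the joint spectral radius quoted in the introduction gives $a_n \geq \varrho(\A)^n = 1$ for every $n \geq 1$, so $b_n/\varrho(\B)^n$ is automatically bounded away from zero. Theorem \ref{th:main} now splits into two cases. In case (i) the uniform upper bound $\|A_{\sigma(n)}\cdots A_{\sigma(1)}v\| \leq C\|v\|$ yields $a_n \leq C$, placing $b_n/\varrho(\B)^n$ in $[1,C]$ for all $n$. In case (ii) the uniform upper bound $\|A_{\sigma(n)}\cdots A_{\sigma(1)}v\| \leq (1 + n\max_j\|A_j\|)\|v\|$ gives $a_n/n \leq 1 + \max_j\|A_j\|$, so $b_n/(n\varrho(\B)^n)$ is uniformly bounded above. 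For the matching lower bound in case (ii), take the switching sequence $\hat\sigma$ and nonzero vector $v$ furnished by part (ii) of the theorem; by \eqref{eq:liminf} there are a constant $c>0$ and an index $n_0$ such that
\[\frac{a_n}{n} \geq \frac{\|A_{\hat\sigma(n)}\cdots A_{\hat\sigma(1)}v\|}{n\|v\|} \geq c\]
for all $n \geq n_0$, and since $a_n/n \geq 1/n > 0$ for every $n \geq 1$ we extract a uniform positive lower bound on $b_n/(n\varrho(\B)^n)$ by taking the minimum over the finite set $\{1,\ldots,n_0-1\}$.

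I do not foresee any real obstacle: once the normalisation $B_j \mapsto B_j/\varrho(\B)$ has been performed, the corollary is a direct bookkeeping consequence of the upper and lower estimates already packaged into Theorem \ref{th:main}, combined with the standard lower bound $a_n \geq \varrho(\A)^n$ supplied by the infimum formula for the joint spectral radius.
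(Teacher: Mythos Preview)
Your argument is correct and is exactly the straightforward derivation the paper has in mind: rescale by $\varrho(\B)$ to reduce to $\varrho(\A)=1$, use the infimum formula to get $a_n\geq 1$, and then read off the two cases of Theorem~\ref{th:main} for the matching upper (and, in case (ii), lower) bounds. The paper does not spell out a proof, so there is nothing further to compare.
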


Theorem \ref{th:main} is ``fragile'' in the sense that its conclusions can become false under only minor modifications to the hypotheses. If the number of switching states is allowed to be infinite then it is known that the rate of marginal instability can be asymptotic to a polynomial sequence $n^\alpha$ with $\frac{1}{2}<\alpha<1$ (see \cite{GuZe01}) 
and therefore no dichotomy between boundedness and linear growth occurs. If the dimension of the matrices is raised to three or higher but the number of switching states is kept finite then similar phenomena may occur (see \cite{PrJu15,VaMo22}). In three dimensions it is moreover shown in \cite{Mo22} that there exist examples with two switching states for which both
\[\liminf_{n \to \infty} \frac{1}{\log n} \sup_{i_1,\ldots,i_n \in \I} \log \left\|A_{i_n} \cdots A_{i_1}\right\|=0\]
and 
\[\limsup_{n \to \infty} \frac{1}{\log n} \sup_{i_1,\ldots,i_n \in \I} \log \left\|A_{i_n} \cdots A_{i_1}\right\|=1\]
and in this case the rate of marginal instability is therefore not asymptotic to any polynomial at all, even in the weakest of possible senses. 

Theorem \ref{th:main} is proved by the consideration of several sub-cases as follows. In the case where no common eigenvector exists it is classical that the system is marginally stable, and in some form this result goes back at least as far as the work of N.E. Barabanov in \cite{Ba88}. Outside this special case the switching states $A_1,\ldots,A_N$ can be made simultaneously upper triangular by choosing a basis whose first element is a common eigenvector, and if every switching state satisfies $|\det A_j|<1$ then marginal stability follows essentially by a result of N. Guglielmi and M. Zennaro \cite[Lemma 5.1]{GuZe01}. The remaining substantial case is that in which there exist two switching states $A_j$, $A_k$ such that $|\det A_j|=|\det A_k|=1$ and such that $A_j$ and $A_k$ are individually diagonalisable, simultaneously triangularisable, and not simultaneously diagonalisable. Prior to the present work this class of systems had been successfully analysed only in certain special cases in which additional algebraic relations exist between the eigenvalues (see \cite[Theorem 10]{BeCoHa16}, \cite[Remark 5.1]{GuZe01}, \cite[Lemma 5.2]{GuZe01} and remarks prior to Proposition 3 of \cite{JuPrBl08}). The complete analysis of this sub-case represents the principal new contribution necessary for the proof of Theorem \ref{th:main}.


\section{Proof of Theorem \ref{th:main}}

\subsection{Three special cases} As was indicated in the preceding section, the proof of Theorem \ref{th:main} proceeds by reduction into three principal sub-cases. Each of these is treated by one of the three propositions which follow. These special cases are organised into a complete proof of the theorem in a subsequent subsection. 

Before stating the three propositions we note the following elementary estimate which will simplify some subsequent calculations.
\begin{lemma}\label{le:only}
The Euclidean operator norm on $2 \times 2$ upper triangular matrices has the following properties:
\begin{enumerate}[(i)]
\item
Let $A \in M_2(\C)$ be upper triangular and let $A' \in M_2(\C)$ be the matrix each of whose entries is equal to the absolute value of the corresponding entry of $A$. Then $\|A\|= \|A'\|$.
\item
Let $A, B \in M_2(\R)$ be upper triangular and non-negative. If every entry of $B$ is greater than or equal to the corresponding entry of $A$, then $\|A\|\leq \|B\|$. If the upper-right entry of $B$ is strictly greater than that of $A$, then $\|A\|<\|B\|$.
\end{enumerate}
\end{lemma}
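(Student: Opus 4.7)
The plan is to derive a single explicit formula for $\|A\|^2$ when $A=\begin{pmatrix}a&b\\0&c\end{pmatrix}$ is upper triangular, in which the dependence on the entries appears only through the moduli $|a|,|b|,|c|$ and is manifestly monotone in them. Both parts of the lemma will follow at once.

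First I would parametrize an arbitrary unit vector $x\in\C^2$ as $x=(t_1e^{i\alpha},t_2e^{i\beta})$ with $t_1,t_2\geq 0$ and $t_1^2+t_2^2=1$; then $\|Ax\|^2=|at_1e^{i\alpha}+bt_2e^{i\beta}|^2+|c|^2t_2^2$. For fixed $t_1,t_2$ the first term is maximised when the two complex numbers $at_1e^{i\alpha}$ and $bt_2e^{i\beta}$ are positively aligned, giving the value $(|a|t_1+|b|t_2)^2$. Taking the supremum over $(t_1,t_2)$ yields
$$\|A\|^2=\sup_{t_1,t_2\geq 0,\;t_1^2+t_2^2=1}\bigl((|a|t_1+|b|t_2)^2+|c|^2t_2^2\bigr).$$
The right-hand side depends only on $|a|,|b|,|c|$, which proves (i); and when $A,B$ are non-negative upper triangular and the entries of $B$ pointwise dominate those of $A$, the expression inside the supremum for $B$ pointwise dominates that for $A$, so $\|A\|\leq\|B\|$, which gives the non-strict half of (ii).

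For the strict inequality in (ii), let $b'>b$ denote the upper-right entries. I would pick $(t_1^*,t_2^*)$ attaining the supremum for $A$ and split into cases. If $t_2^*>0$ then substituting $b'$ for $b$ in the expression already produces a strictly larger value at $(t_1^*,t_2^*)$, so $\|B\|^2>\|A\|^2$. If instead $t_2^*=0$ then $\|A\|^2=a^2$, and testing the perturbation $(\cos\epsilon,\sin\epsilon)$ in the formula for $B$ yields $a^2+2ab'\epsilon+O(\epsilon^2)>a^2$ for small $\epsilon>0$ whenever $a>0$; the degenerate case $a=0$ forces $b=c=0$ (since $b^2+c^2\leq a^2$) and hence $\|A\|=0<b'\leq\|B\|$. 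I do not anticipate a substantial obstacle beyond this short case analysis for the strict inequality; the rest is a direct consequence of the explicit formula above.
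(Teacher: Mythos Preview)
Your argument is correct and proceeds by a genuinely different route from the paper's. You work directly from the variational definition $\|A\|^2=\sup_{\|x\|=1}\|Ax\|^2$, optimise first over the phases of the components of $x$ to reduce to a supremum over the non-negative quarter of the unit circle, and then read off (i) and the non-strict half of (ii) from the resulting expression; the strict inequality costs you a short case analysis on whether the maximiser for $A$ has $t_2^*>0$. The paper instead computes $AA^*$, invokes the Perron--Frobenius theorem for the non-strict monotonicity in (ii), and then writes down the closed-form eigenvalue
\[
\|A\|^2=\tfrac12\bigl(|a|^2+|b|^2+|c|^2\bigr)+\tfrac12\sqrt{(|a|^2-|c|^2)^2+|b|^2\bigl(2|a|^2+|b|^2+2|c|^2\bigr)},
\]
from which (i) and the strict monotonicity in $|b|$ are visible by inspection. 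Your approach is the more elementary one (no Perron--Frobenius, no eigenvalue calculation); the paper's buys an explicit closed formula that dispatches the strict inequality without any case-splitting. One small clean-up: in your Case~2 expansion you write $a^2+2ab'\epsilon+O(\epsilon^2)$ as though the diagonal entries of $B$ coincide with those of $A$; if instead $a'>a$ you are done at once by evaluating at $(1,0)$, so this does not affect correctness.
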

\begin{proof}
For every $a,b,c \in \C$ we have 
\[\left\|\begin{pmatrix} a&b\\0&c\end{pmatrix} \right\|^2=\rho\left(\begin{pmatrix} a&b\\0&c\end{pmatrix}\begin{pmatrix} a^*&0\\b^*&c^*\end{pmatrix}\right) = \rho\left(\begin{pmatrix} |a|^2+|b|^2&bc^*\\b^*c&|c|^2\end{pmatrix}\right)\]
and when $a, b, c$ are real and non-negative this last matrix is also real and non-negative. It follows easily from the Perron-Frobenius theorem for non-negative matrices that the spectral radius of the last matrix is non-decreasing as a function of the non-negative real variables $a$, $b$ and $c$, and this proves the first clause of (ii). For every $a,b,c \in \C$ we may further compute 
\[ \left\|\begin{pmatrix} a&b\\0&c\end{pmatrix} \right\|^2 =\frac{1}{2}\left(|a|^2+|b|^2+|c|^2\right) + \frac{1}{2}\sqrt{\left(|a|^2-|c|^2\right)^2+|b|^2\left(2|a|^2+|b|^2+2|c|^2\right)}.\]
It is immediately clear from this expression that the norm of the matrix is unchanged if $a$, $b$ and $c$ are replaced with their absolute values, which gives (i). It is also obvious that a strict increase in the value of $|b|$ results in a strict increase in the value of the norm, and this yields the second clause of (ii).\end{proof}
The following result is by now rather standard in the ``qualitative'' form
\[\sup_{n \geq 1} \max_{i_1,\ldots,i_n \in \{1,\ldots,N\}}\left\|A_{i_n}\cdots A_{i_1}\right\|<\infty\]
and in this form it can be obtained straightforwardly from such works as \cite{Ba88,El95,Ju09,Wi02} and many others besides. Since we can easily give an explicit bound on this supremum, we include such a bound for interest.
\begin{proposition}\label{pr:irr}
Let $A_1,\ldots,A_N$ be $2\times 2$ complex matrices such that the joint spectral radius $\varrho(\{A_1,\ldots,A_N\})$ is equal to $1$, and suppose that $A_1,\ldots,A_N$ do not have a common eigenspace. Then the real number
\[\kappa:=\min_{\|u\|, \|v\|= 1} \max_{X \in\{ I,A_1,\ldots,A_N\}} |\langle Xu,v\rangle|\]
is nonzero, and we have 
\[\sup_{n \geq 1} \max_{i_1,\ldots,i_n \in \{1,\ldots,N\}}\left\|A_{i_n}\cdots A_{i_1}\right\| \leq \kappa^{-1} \max_j \|A_j\|.\]
\end{proposition}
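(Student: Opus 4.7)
To see $\kappa > 0$, note that $(u,v) \mapsto \max_{X \in \{I,A_1,\ldots,A_N\}} |\langle Xu,v\rangle|$ is continuous on the compact product of unit spheres, so its minimum is attained. If $\kappa = 0$ there exist unit $u,v$ with $\langle u,v\rangle = 0$ and $\langle A_j u, v\rangle = 0$ for every $j$; the one-dimensional subspace $v^\perp \subset \C^2$ then contains $u$ and all $A_j u$, forcing $u$ to be a common eigenvector of $A_1, \ldots, A_N$ and contradicting the hypothesis.

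For the norm bound I would first prove the following squaring step: for any product $M = A_{i_n}\cdots A_{i_1}$ with $\|M\| = L$ there exists $X \in \{I, A_1, \ldots, A_N\}$ such that $\|MXM\| \geq \kappa L^2$, so that $MXM$ is a product of length at most $2n+1$. To establish this, take unit vectors $u, v$ with $Mu = Lv$ (the top singular vectors of $M$) and apply the defining property of $\kappa$ at the pair $(v, u)$ to produce $X$ with $|\langle Xv, u\rangle| \geq \kappa$. Writing $M = L v u^* + \sigma_2 v^\perp (u^\perp)^*$ in singular form, the elementary bound $\|Mw\| \geq L\,|\langle w, u\rangle|$ holds for every $w \in \C^2$; applied with $w = Xv$ it yields $\|MXv\| \geq \kappa L$, and hence $\|MXMu\| = L\,\|MXv\| \geq \kappa L^2$ as required.

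Iterating the squaring step produces, for every $j \geq 0$, a product $M_j$ of length $m_j \leq 2^j(n+1)$ satisfying $\|M_j\| \geq \kappa^{-1}(\kappa L)^{2^j}$. On the other hand, the assumption $\varrho(\A) = 1$ together with Gelfand's formula says that for every $\epsilon > 0$ there is some $N$ with $\|P\| \leq (1+\epsilon)^m$ for every product $P$ of length $m \geq N$. Combining these and taking the $2^j$-th root as $j \to \infty$ forces $\kappa L \leq (1+\epsilon)^{n+1}$, and then letting $\epsilon \to 0$ gives $L \leq \kappa^{-1}$. Since $\max_j \|A_j\| \geq \varrho(\A) = 1$, the desired bound $\kappa^{-1} \max_j \|A_j\|$ follows. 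The main delicate point is the squaring step: one has to handle both cases $X = I$ (giving $M^2$) and $X = A_k$ (giving $MA_kM$) uniformly through the singular-value identity, and correctly track the doubling length $m_j$ when applying Gelfand's formula.
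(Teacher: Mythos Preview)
Your argument is correct. The positivity of $\kappa$ is handled identically to the paper. Your squaring step is the special case $B_1=B_2=M$ of the paper's key inequality $\max_X\|B_1XB_2\|\geq\kappa\|B_1\|\,\|B_2\|$; the paper proves it by Cauchy--Schwarz with $\hat u=\|B_2\|^{-1}B_2u$ and $\hat v=\|B_1\|^{-1}B_1^*v$, which is essentially your SVD computation written in coordinate-free form.

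Where you diverge from the paper is in extracting the bound. The paper converts the key inequality into a supermultiplicativity statement $\beta_{n+m}\geq\beta_n\beta_m$ for $\beta_n:=(\max_j\|A_j\|)^{-1}\kappa\,\alpha_n$ and then invokes Fekete's lemma to conclude $\beta_n\leq 1$, i.e.\ $\alpha_n\leq\kappa^{-1}\max_j\|A_j\|$. You instead iterate the doubling map $M\mapsto MXM$ directly, obtaining $\|M_j\|\geq\kappa^{-1}(\kappa L)^{2^j}$ with length $m_j\leq 2^j(n+1)$, and compare against the subexponential growth forced by $\varrho(\A)=1$. Your route avoids the step where the paper absorbs the ``length $n+m$ versus $n+m+1$'' ambiguity at the cost of a factor $\max_j\|A_j\|$, and as a result you actually prove the sharper inequality $\alpha_n\leq\kappa^{-1}$ (the stated bound then follows since $\max_j\|A_j\|\geq\varrho(\A)=1$). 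The Fekete approach has the advantage that it generalises verbatim to arbitrary pairs $(n,m)$ and yields the exact identity $\sup_n\beta_n^{1/n}=\lim_n\beta_n^{1/n}$, while your doubling argument is more self-contained and gives a better constant here.
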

\begin{proof}
By continuity and the compactness of the unit sphere in $\C^2$ there exist unit vectors $u',v' \in \C^2$ such that 
\[ \max_{X \in \{I,A_1,\ldots,A_N\}} |\langle Xu',v'\rangle|=\min_{\|u\|, \|v\|= 1} \max_{X \in \{I,A_1,\ldots,A_N\}} |\langle Xu,v\rangle|=:\kappa.\]
If $\kappa=0$ then this implies that all of the vectors $u', A_1u',\ldots,A_Nu'$ are perpendicular to $v'$. Since the orthogonal complement of $v'$ is one-dimensional this implies that all of the vectors $u', A_1u',\ldots,A_Nu'$ are proportional to one another, which is to say that $u'$ is a shared eigenvector for $A_1,\ldots,A_N$. This contradicts the hypotheses and we conclude that $\kappa$ must be nonzero as required.

We now claim that for every pair of $2\times 2$ complex matrices $B_1$ and $B_2$,
\begin{equation}\label{eq:bee}\max_{X \in \{I, A_1, \ldots, A_N\}} \|B_1XB_2\| \geq \kappa \|B_1\|\cdot \|B_2\|.\end{equation}
By continuity it suffices to establish the result for invertible matrices $B_1$ and $B_2$. Given invertible matrices $B_1$ and $B_2$ choose unit vectors $u, v \in \C^2$ such that $\|B_1^*v\|=\|B_1\|$ and $\|B_2u\|=\|B_2\|$. Define two unit vectors by $\hat{u}:=\|B_2\|^{-1}B_2u$ and $\hat{v}:=\|B_1\|^{-1}B_1^*v$. Using the Cauchy-Schwarz inequality we obtain
\begin{align*}\max_{X \in \{I,A_1,\ldots,A_N\}} \|B_1XB_2\|  &\geq \max_{X \in \{I,A_1,\ldots,A_N\}} |\langle B_1XB_2u,v \rangle|\\
&=\max_{X \in \{I,A_1,\ldots,A_N\}} |\langle XB_2u, B_1^*v\rangle|\\
&=\|B_1\|\cdot \|B_2\|\cdot \max_{X \in \{I,A_1,\ldots,A_N\}} |\langle X\hat{u},\hat{v}\rangle|\geq\kappa \|B_1\|\cdot\|B_2\|\end{align*}
and we have proved \eqref{eq:bee}. 

Now define
\[\alpha_n:=\max_{j_1,\ldots,j_n \in \{1,\ldots,N\}} \left\|A_{j_n}\cdots A_{j_1}\right\|\]
for every $n \geq 1$. By the definition of the joint spectral radius we have $\lim_{n \to \infty} \alpha_n^{1/n}=\varrho(\{A_1,\ldots,A_N\})$. If $n,m \geq 1$ are arbitrary, choose $j_1,\ldots,j_n, k_1,\ldots,k_m \in \{1,\ldots,N\}$ such that
\[\alpha_n=\|A_{j_n}\cdots A_{j_1}\|,\qquad \alpha_m=\|A_{k_m}\cdots A_{k_1}\|.\] 
Applying \eqref{eq:bee} with $B_1:=A_{j_n}\cdots A_{j_1}$ and $B_2:=A_{k_m}\cdots A_{k_1}$ we find that
\begin{align*}\max \{\alpha_{n+m}, \alpha_{n+m+1}\} &\geq \max_{X \in \{I, A_1,\ldots,A_N\}} \|A_{j_n}\cdots A_{j_1} X A_{k_m}\cdots A_{k_1}\|\\
&\geq  \kappa \cdot \|A_{j_n}\cdots A_{j_1}\|\cdot  \|A_{k_m}\cdots A_{k_1}\|\\
&=\kappa \cdot\alpha_n \alpha_m,\end{align*}
and since obviously $\alpha_{n+m+1} \leq (\max_j \|A_j\|) \cdot \alpha_{n+m}$ we have
\[\alpha_{n+m} \geq (\max_j \|A_j\|)^{-1} \kappa\cdot \alpha_n \alpha_m\]
for every $n,m \geq 1$. It follows that if we define a further sequence $(\beta_n)$ by $\beta_n:= (\max_j \|A_j\|)^{-1}\kappa \alpha_n$ then $\beta_{n+m} \geq \beta_n \beta_m$ for all $n,m\geq 1$. Fekete's subadditivity lemma therefore applies to the sequence $(-\log \beta_n)$ and provides
\[ \sup_{n \geq 1} \beta_n^{\frac{1}{n}} = \lim_{n \to \infty} \beta_n^{\frac{1}{n}}=\lim_{n \to \infty} \alpha_n^{\frac{1}{n}}=\varrho(\{A_1,\ldots,A_N\})=1.\]
We conclude that $\beta_n \leq 1$ for every $n \geq 1$ and this is precisely the desired result.
\end{proof}
The following result is closely related to \cite[Lemma 5.2]{GuZe01}, but we use an alternative method of proof due to J. Varney (see \cite[\S3.1.2]{Va22} and \cite{MoVa22}). This method results in an explicit bound which is not present in the earlier work \cite{GuZe01}. 
\begin{proposition}\label{pr:bdd}
Let $A_1,\ldots,A_N$ be $2\times 2$ upper-triangular complex matrices and let $\lambda \in (0,1)$ and $M>0$. Suppose that:
\begin{enumerate}[(i)]
\item
The diagonal entries of every matrix $A_j$ have absolute value at most $1$;
 \item
Every matrix $A_j$ either is diagonal, or has at least one diagonal entry with absolute value smaller than $\lambda$;
\item
Every off-diagonal entry of every $A_j$ has absolute value at most $M$.
\end{enumerate}
Then 
\[\sup_{n \geq 1} \max_{i_1,\ldots,i_n \in \{1,\ldots,N\}} \|A_{i_n}\cdots A_{i_1}\| \leq 1+\frac{2M}{1-\lambda}.\]
\end{proposition}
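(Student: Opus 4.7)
The plan is to first reduce to the case of non-negative real entries via Lemma~\ref{le:only}, then analyse the off-diagonal entry of a product $P_n = A_{i_n}\cdots A_{i_1}$ explicitly, and finally convert the resulting bound on that entry into a bound on $\|P_n\|$.

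By Lemma~\ref{le:only}(i), replacing each $A_j$ by its entrywise absolute value matrix preserves its operator norm; since entrywise $|A_{i_n}\cdots A_{i_1}|$ is dominated by the product of the corresponding absolute value matrices (triangle inequality applied to each matrix entry of the product), a second application of Lemma~\ref{le:only}(ii) allows us to assume throughout that each $A_j = \bigl(\begin{smallmatrix} a_j & b_j \\ 0 & c_j \end{smallmatrix}\bigr)$ has non-negative real entries with $a_j, c_j \in [0,1]$, $b_j \in [0,M]$, and such that $b_j > 0$ implies $\min(a_j, c_j) < \lambda$. Write $P_n = \bigl(\begin{smallmatrix} \alpha_n & \beta_n \\ 0 & \gamma_n \end{smallmatrix}\bigr)$; then $\alpha_n, \gamma_n \in [0,1]$, and expanding the product yields
\[
\beta_n \;=\; \sum_{k=1}^n b_{i_k}\, u(k)\, v(k), \qquad u(k) := \prod_{j>k} a_{i_j}, \quad v(k) := \prod_{j<k} c_{i_j}.
\]

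The heart of the argument is a geometric-series estimate for this sum. Partition the indices $k$ with $b_{i_k} > 0$ into $S_a := \{k : a_{i_k} < \lambda\}$ and $S_c := \{k : a_{i_k} \geq \lambda\}$; by hypothesis (ii), $k \in S_c$ forces $c_{i_k} < \lambda$. Enumerate $S_a$ as $k_1 < k_2 < \cdots < k_r$. The quotient $u(k_s)/u(k_{s+1}) = \prod_{j=k_s+1}^{k_{s+1}} a_{i_j}$ contains the factor $a_{i_{k_{s+1}}} < \lambda$, with all remaining factors in $[0,1]$, so $u(k_s) \leq \lambda\, u(k_{s+1})$. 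Combined with $u(k_r) \leq 1$, this gives $u(k_{r-s}) \leq \lambda^s$, and hence, using also $v \leq 1$,
\[
\sum_{k \in S_a} b_{i_k}\, u(k)\, v(k) \;\leq\; M \sum_{s \geq 0} \lambda^s \;=\; \frac{M}{1-\lambda}.
\]
A symmetric argument, telescoping along $S_c$ via $v(\ell_{s+1}) \leq \lambda\, v(\ell_s)$, gives the same bound for the $S_c$ sum. Adding yields $\beta_n \leq 2M/(1-\lambda)$.

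To finish, note that Lemma~\ref{le:only}(ii) dominates $P_n$ in norm by $\bigl(\begin{smallmatrix} 1 & \beta_n \\ 0 & 1 \end{smallmatrix}\bigr)$, and the explicit norm formula from the proof of Lemma~\ref{le:only} yields $\|\bigl(\begin{smallmatrix} 1 & \beta \\ 0 & 1 \end{smallmatrix}\bigr)\|^2 = 1 + \tfrac{1}{2}\beta^2 + \tfrac{1}{2}\beta\sqrt{\beta^2+4}$, which a direct algebraic check shows to be at most $(1+\beta)^2$ for all $\beta \geq 0$ (squaring $\sqrt{\beta^2+4} \leq \beta + 4$ is trivial). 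Thus $\|P_n\| \leq 1 + \beta_n \leq 1 + 2M/(1-\lambda)$. The main obstacle is really just bookkeeping the partition $S_a \sqcup S_c$ so that each half telescopes cleanly; none of the remaining steps present serious difficulty.
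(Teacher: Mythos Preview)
Your proof is correct, but the route differs from the paper's. The paper first dominates each $A_j$ entrywise by one of three explicit ``worst-case'' matrices
\[
B_1=\begin{pmatrix}1&M\\0&\lambda\end{pmatrix},\quad B_2=\begin{pmatrix}\lambda&M\\0&1\end{pmatrix},\quad B_3=I,
\]
reduces to products in $B_1,B_2$, and then uses a swapping argument based on the commutator $B_1B_2-B_2B_1=\bigl(\begin{smallmatrix}0&2M(1-\lambda)\\0&0\end{smallmatrix}\bigr)$ together with Lemma~\ref{le:only}(ii) to show that among all products with $m$ copies of $B_1$ and $n-m$ of $B_2$ the norm is maximised by $B_1^mB_2^{n-m}$, which is then computed explicitly. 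Your argument bypasses this combinatorial optimisation entirely: you work directly with the formula $\beta_n=\sum_k b_{i_k}u(k)v(k)$ and obtain the geometric decay by partitioning the nonzero terms according to which diagonal entry is small, telescoping each half separately. This is more elementary and arguably more transparent; the paper's approach, on the other hand, identifies the extremal product exactly, which carries slightly more information. One cosmetic remark: your final step, bounding $\bigl\|\bigl(\begin{smallmatrix}1&\beta\\0&1\end{smallmatrix}\bigr)\bigr\|$ via the explicit singular-value formula, is correct but heavier than needed; the paper simply uses the triangle inequality $\bigl\|\bigl(\begin{smallmatrix}1&\beta\\0&1\end{smallmatrix}\bigr)\bigr\|\le\|I\|+\bigl\|\bigl(\begin{smallmatrix}0&\beta\\0&0\end{smallmatrix}\bigr)\bigr\|=1+\beta$.
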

\begin{proof}
Define three matrices by
\[B_1=\begin{pmatrix} 1&M\\0&\lambda\end{pmatrix},\qquad B_2=\begin{pmatrix} \lambda &M\\0&1\end{pmatrix},\qquad B_3=\begin{pmatrix} 1&0\\0&1\end{pmatrix}.\]
By hypothesis, for every $j=1,\ldots,N$ there exists $k=k(j) \in \{1,2,3\}$ such that every entry of the matrix $A_j$ has absolute value less than or equal to the corresponding entry of the matrix $B_{k(j)}$. For every $j=1,\ldots,N$ let us write
\[A_j=\begin{pmatrix} a_j & b_j \\ 0&c_j\end{pmatrix}.\]
If $n \geq 1$ and $j_1,\ldots,j_n \in \{1,\ldots,N\}$ are arbitrary, then by repeated applications of Lemma \ref{le:only}
\begin{align*}\left\|A_{j_n}\cdots A_{j_1}\right\| &=\left\|\begin{pmatrix} a_{j_n}&b_{j_n}\\ 0&c_{j_n} \end{pmatrix}\cdots \begin{pmatrix} a_{j_1}&b_{j_1}\\ 0&c_{j_1} \end{pmatrix}\right\| \\
& =\left\|\begin{pmatrix} a_{j_n}\cdots a_{j_1}&\sum_{\ell=1}^n a_{j_n}\cdots a_{j_{\ell+1}} b_{j_\ell} c_{j_{\ell-1}}\cdots c_{j_1} \\ 0&c_{j_n}\cdots c_{j_1} \end{pmatrix}\right\| \\
& =\left\|\begin{pmatrix} |a_{j_n}\cdots a_{j_1}|&\left|\sum_{\ell=1}^n a_{j_n}\cdots a_{j_{\ell+1}} b_{j_\ell} c_{j_{\ell-1}}\cdots c_{j_1}\right| \\ 0&|c_{j_n}\cdots c_{j_1}| \end{pmatrix}\right\| \\
& \leq\left\|\begin{pmatrix} |a_{j_n}\cdots a_{j_1}|&\sum_{\ell=1}^n |a_{j_n}\cdots a_{j_{\ell+1}} b_{j_\ell} c_{j_{\ell-1}}\cdots c_{j_1}| \\ 0&|c_{j_n}\cdots c_{j_1}| \end{pmatrix}\right\| \\
& =\left\|\begin{pmatrix} |a_{j_n}|&|b_{j_n}|\\ 0&|c_{j_n}| \end{pmatrix}\cdots \begin{pmatrix} |a_{j_1}|&|b_{j_1}|\\ 0&|c_{j_1}| \end{pmatrix}\right\| \\
&\leq \left\|B_{k(j_n)} \cdots B_{k(j_1)}\right\|.\end{align*}
We conclude that
\[\sup_{n \geq 1} \max_{i_1,\ldots,i_n \in \{1,\ldots,N\}} \|A_{i_n}\cdots A_{i_1}\| \leq \sup_{n \geq 1} \max_{j_1,\ldots,j_n \in \{1,2,3\}} \|B_{j_n}\cdots B_{j_1}\|.\]
Moreover, since $B_3$ is simply the identity matrix, its removal from a product $B_{j_n}\cdots B_{j_1}$ does not change the norm of that product. Therefore, 
\[ \sup_{n \geq 1} \max_{j_1,\ldots,j_n \in \{1,2,3\}} \|B_{j_n}\cdots B_{j_1}\|= \sup_{n \geq 1} \max_{j_1,\ldots,j_n \in \{1,2\}} \|B_{j_n}\cdots B_{j_1}\|.\]
Consider now a product $B_{j_n}\cdots B_{j_1}$ of the matrices $B_1$ and $B_2$ which has exactly $m$ instances of $B_1$ and $n-m$ instances of $B_2$. We claim that
\[\|B_{j_n}\cdots B_{j_1}\|\leq \|B_1^{m}B_2^{n-m}\|.\]
Suppose for a contradiction that $B_{j_n}\cdots B_{j_1}$ is a product which includes exactly $m$ instances of $B_1$ and $n-m$ instances of $B_2$, that $\|B_{j_n}\cdots B_{j_1}\|$ is maximal among all products with the former property, but that $\|B_{j_n}\cdots B_{j_1}\|> \|B_1^{m}B_2^{n-m}\|$. Since $B_{j_n}\cdots B_{j_1} \neq B_1^{m}B_2^{n-m}$ the former product necessarily contains an instance of the product $B_2B_1$, so that we may write
\[B_{j_n}\cdots B_{j_1}=(B_{j_n}\cdots B_{j_{\ell+3}} )B_2B_1(B_{j_\ell} \cdots B_{j_1}),\] 
say. Define $X:=B_{j_n}\cdots B_{j_1}$. The difference
\[Y:=(B_{j_n}\cdots B_{j_{\ell+3}})B_1B_2(B_{j_\ell} \cdots B_{j_1})-(B_{j_n}\cdots B_{j_{\ell+3}})B_2B_1(B_{j_\ell} \cdots B_{j_1})\]
is equal to 
\[(B_{j_n}\cdots B_{j_{\ell+3}})(B_1B_2-B_2B_1)(B_{j_\ell} \cdots B_{j_1}).\]
By direct calculation
\[B_1B_2-B_2B_1=\begin{pmatrix}\lambda&2M\\0&\lambda \end{pmatrix}-\begin{pmatrix}\lambda &2M\lambda\\0&\lambda\end{pmatrix}=\begin{pmatrix}0&2M(1-\lambda)\\0&0\end{pmatrix}\]
and it follows easily that the matrix $Y$ is non-negative and upper triangular and its upper-right entry is positive. Consequently, by Lemma \ref{le:only}(ii),
\begin{align*}\|(B_{j_n}\cdots B_{j_{\ell+3}} )B_1B_2(B_{j_\ell} \cdots B_{j_1}) \| &=\|X+Y\|\\
&>\|X\|=\|(B_{j_n}\cdots B_{j_{\ell+3}} )B_2B_1(B_{j_\ell} \cdots B_{j_1}) \|.\end{align*}
This inequality contradicts the presumed maximality of $\|B_{j_n}\cdots B_{j_1}\|$. We conclude that if 
$B_{j_n}\cdots B_{j_1}$ is composed of exactly $m$ instances of $B_1$ and $n-m$ instances of $B_2$ then necessarily
\[\|B_{j_n}\cdots B_{j_1}\|\leq \left\|B_1^{m}B_2^{n-m}\right\|\]
as claimed. Since $m$ and $n$ were arbitrary it follows that 
\[\sup_{n \geq 1} \max_{j_1,\ldots,j_n \in \{1,2\}} \|B_{j_n}\cdots B_{j_1}\|\leq \sup_{n \geq 1} \max_{0 \leq m \leq n} \left\|B_1^{m} B_2^{n-m}\right\|.\]
By direct calculation
\begin{align*}B_1^mB_2^{n-m}&=\begin{pmatrix}1 &M\sum_{\ell=0}^{m-1} \lambda^\ell\\ 0&\lambda^m  \end{pmatrix}\begin{pmatrix}\lambda^{n-m} &M\sum_{\ell=0}^{n-m-1} \lambda^\ell\\ 0&1  \end{pmatrix}\\
&=\begin{pmatrix} \lambda^m & M\left(\sum_{\ell=0}^{m-1}\lambda^\ell + \sum_{\ell=0}^{m-n-1}\lambda^\ell\right) \\ 0&\lambda^{n-m}\end{pmatrix}\end{align*}
and using Lemma \ref{le:only} once more
\begin{align*}\sup_{n \geq 1} \max_{i_1,\ldots,i_n \in \{1,\ldots,N\}} \left\|A_{i_n}\cdots A_{i_1}\right\|  &\leq  \sup_{n \geq 1} \max_{j_1,\ldots,j_n \in \{1,2\}} \left\|B_{j_n}\cdots B_{j_1}\right\|\\
&\leq \sup_{n \geq 1} \max_{0 \leq m \leq n} \left\|B_1^{m} B_2^{n-m}\right\|\\
&\leq \left\| \begin{pmatrix}1&2M\sum_{\ell=0}^\infty \lambda^\ell \\0&1\end{pmatrix}\right\|\\
&\leq \left\| \begin{pmatrix}1&0 \\0&1\end{pmatrix}\right\| + \left\| \begin{pmatrix}0&2M\sum_{\ell=0}^\infty \lambda^\ell \\0&0\end{pmatrix}\right\|\\
&=1+\frac{2M}{1-\lambda}\end{align*}completing the proof of the proposition.
\end{proof}
Our third proposition provides the core novelty needed to prove Theorem \ref{th:main}.
\begin{proposition}\label{pr:tr}
Let $A_1,A_2$ be $2 \times 2$ upper-triangular complex matrices which are diagonalisable but not simultaneously diagonalisable, which both have spectral radius equal to $1$, and which satisfy $|\det A_1|=|\det A_2|=1$. Then there exist a sequence $\sigma \colon \N \to \{1,2\}$ and vector $v \in \C^2$ such that
\[\liminf_{n \to \infty}\frac{1}{n}\left\|A_{\sigma(n)}\cdots A_{\sigma(1)}v\right\| >0.\]
\end{proposition}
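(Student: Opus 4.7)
The plan is to reduce the problem to the dynamics of two affine rotations of the plane via the common eigenvector, and then to exhibit a linearly escaping orbit by a concrete alignment construction. Since $|\det A_j|=1$ and $\varrho(\{A_1,A_2\})=1$ with $A_j$ upper triangular, both diagonal entries of each $A_j$ lie on the unit circle; in the affine chart $z=x/y$ on $\P^1(\C)$, $A_j$ acts by $f_j(z)=a_jz+b_j$ with $a_j=\alpha_j/\gamma_j\in S^1$ and $b_j=\beta_j/\gamma_j$. Individual diagonalisability forces $a_j\neq 1$ for both $j$ (otherwise $A_j$ is either non-diagonalisable or scalar, and a scalar $A_j$ paired with the other diagonalisable matrix would be simultaneously diagonalisable with it). Non-simultaneous diagonalisability translates to distinctness of the fixed points $p_j=b_j/(1-a_j)\in\C$, and a further upper-triangular change of basis --- conjugation by a translation in the affine chart --- moves one of them to $0$, so after relabelling if necessary I may assume $f_1(z)=a_1z$ (pure rotation about $0$) and $f_2(z)=a_2z+b$ with $|a_j|=1$, $a_j\neq 1$, and $b\neq 0$. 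Writing $v_n=(x_n,y_n)^T$, the modulus $|y_n|$ is constant in $n$ and $z_n:=x_n/y_n$ evolves by $z_{n+1}=f_{\sigma(n+1)}(z_n)$, with $\|v_n\|\geq |y_0|\cdot|z_n|$; taking $v_0=(0,1)^T$, it suffices to produce $\sigma$ with $\liminf_n|z_n|/n>0$.

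The main construction proceeds in rounds, starting at $z_0=0$. Each round consists of $K_r$ applications of $f_1$ followed by a single application of $f_2$, where $K_r\leq K_0$ is chosen so that after the $K_r$ rotations, $a_2a_1^{K_r}z_r$ points approximately in the direction of $b$, maximising $|z_{r+1}|=|a_2a_1^{K_r}z_r+b|$. Provided that $\{a_1^k:0\leq k\leq K_0\}$ is $\varepsilon$-dense in $S^1$ for some fixed $\varepsilon<\pi/2$, one can always select such a $K_r$ with alignment error at most $\varepsilon$, and a direct computation yields $|z_{r+1}|^2\geq|z_r|^2+2|z_r||b|\cos\varepsilon+|b|^2\geq (|z_r|+|b|\cos\varepsilon)^2$, so $|z_{r+1}|\geq|z_r|+|b|\cos\varepsilon$. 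Since the $f_1$-steps preserve $|z|$ exactly (each $f_1$ is rotation about $0$), $|z_n|$ is piecewise constant along $\sigma$ and jumps only at the $f_2$-steps; iterating the inequality telescopically gives $|z_R|\geq R|b|\cos\varepsilon$ after $R$ rounds, while the total length of these rounds is at most $R(K_0+1)$, producing $\liminf_n|z_n|/n\geq |b|\cos\varepsilon/(K_0+1)>0$.

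A suitable $K_0$ exists whenever $a_1$ is irrational (by a compactness argument: $\{a_1^k\}_{k\geq 0}$ is dense in $S^1$, so finitely many of its elements are $\varepsilon$-dense) or a root of unity of order $m_1\geq 3$ (where the cyclic set $\{a_1^k\}_{0\leq k<m_1}$ is $(\pi/m_1)$-dense with $K_0=m_1-1$). Since we were free to relabel the two matrices, this handles every case except $a_1=a_2=-1$; in that remaining case one checks directly that $f_2\circ f_1(z)=z+b$ is an exact nonzero translation, and iterating the two-letter word $f_2f_1$ gives $|z_{2n}|=n|b|$. The main obstacle I anticipate is the book-keeping needed to transfer the per-round linear gain into a uniform linear lower bound on $|z_n|/n$ over all $n$, not merely at round ends; this is handled by the observation that $|z_n|$ is constant during each $f_1$-phase within a round, so in the interior of round $r+1$ we still have $|z_n|\geq|z_r|\geq r|b|\cos\varepsilon$, and dividing by $n\leq(r+1)(K_0+1)$ produces the required positive liminf. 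A secondary technical point is verifying the uniformity of $K_0$ in the irrational case and handling the short initial transient before $|z_r|$ first exceeds $|b|$; each of these is routine once the alignment step is in place.
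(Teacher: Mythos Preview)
Your argument is correct and is built on the same projective/affine reduction as the paper's, but your case analysis is more economical. The paper, after the same normalisation to $f_1(z)=e^{i\phi}z+(e^{i\phi}-1)$, $f_2(z)=e^{i\psi}z$, splits into three cases according to whether $e^{i\phi}$ and $e^{i\psi}$ are roots of unity: when both are rational it exhibits the commutator $A_1A_2A_1^{-1}A_2^{-1}$ as a nontrivial Jordan block and uses a periodic word; when both are irrational it runs an alignment argument that uses density of \emph{both} orbits $\{e^{in\phi}\}$ and $\{e^{im\psi}\}$ (choosing $m$ first and then $n$ so that $|\beta|\geq|\alpha|+1$); and it reduces the mixed case to the irrational one by replacing $A_2$ with $A_1A_2$. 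Your construction, by contrast, uses only the density of the orbit of the single pure rotation $f_1$, so the alignment step needs just one quantifier and the rational/irrational distinction for $a_2$ becomes irrelevant. This is why your case split collapses to the single exceptional situation $a_1=a_2=-1$, where your direct computation $f_2\circ f_1(z)=z+b$ matches in spirit the paper's commutator trick. The price you pay is that rounds can have variable length up to $K_0+1$, but your observation that $|z_n|$ is constant during each $f_1$-phase handles the interpolation cleanly, just as the paper uses invertibility of the $A_j$ to bridge between round endpoints.
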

\begin{proof}
Clearly both diagonal entries of both matrices are complex numbers of unit modulus. Since the conclusion is unaffected by replacing $A_1$ with $e^{-i\theta_1}A_1$ and $A_2$ with $e^{-i\theta_2}A_2$ for some $\theta_1,\theta_2 \in \R$, we assume without loss of generality that the lower-right entry of each of $A_1$ and $A_2$ is equal to $1$, so that
\[A_1=\begin{pmatrix} e^{i\phi} & a\\ 0&1\end{pmatrix},\qquad A_2=\begin{pmatrix} e^{i\psi} & b\\ 0&1\end{pmatrix},\]
say. If we had $e^{i\phi}=1$ then since $A_1$ is diagonalisable it would have to be the identity, and in any basis such that $A_2$ is diagonal, both matrices would be diagonal, contradicting the hypotheses. We therefore must have $e^{i\phi}\neq 1$ and by the same reasoning also $e^{i\psi} \neq 1$. To prove the proposition we will treat three cases in turn: that in which both $e^{i\phi}$ and $e^{i \psi}$ are roots of unity; that in which neither is a root of unity; and that in which exactly one of the two numbers is a root of unity. 

{\bf{First case.}} Suppose first that both $e^{i\phi}$ and $e^{i\psi}$ are roots of unity. The conclusion of the proposition is unaffected by a  change of basis for $\C^2$, so we will begin by rewriting the matrices $A_1$ and $A_2$ in a convenient form. By an upper-triangular change of basis for $\C^2$ we may obtain
\[A_1=\begin{pmatrix} e^{i\phi} &a'\\ 0&1\end{pmatrix},\qquad A_2=\begin{pmatrix} e^{i\psi} & 0\\ 0&1\end{pmatrix},\]
say. Since $A_1$ and $A_2$ are not simultaneously diagonalisable $a'$ cannot be zero. Making use of the fact that $e^{i\phi}-1$ and $a'$ are nonzero we have
\[\begin{pmatrix} \sqrt{\frac{e^{i\phi}-1}{a'}} &0 \\0 &1/\sqrt{\frac{e^{i\phi}-1}{a'}}   \end{pmatrix}  \begin{pmatrix} e^{i\phi} &a' \\0 &1 \end{pmatrix}\begin{pmatrix} \sqrt{\frac{e^{i\phi}-1}{a'}} &0 \\0 &1/\sqrt{\frac{e^{i\phi}-1}{a'}}   \end{pmatrix} ^{-1}=\begin{pmatrix} e^{i\phi} &e^{i\phi}-1 \\0 &1 \end{pmatrix},\]
\[\begin{pmatrix} \sqrt{\frac{e^{i\phi}-1}{a'}} &0 \\0 &1/\sqrt{\frac{e^{i\phi}-1}{a'}}   \end{pmatrix}   \begin{pmatrix} e^{i\psi} &0 \\0 &1 \end{pmatrix}\begin{pmatrix} \sqrt{\frac{e^{i\phi}-1}{a'}} &0 \\0 &1/\sqrt{\frac{e^{i\phi}-1}{a'}}   \end{pmatrix} ^{-1}=\begin{pmatrix} e^{i\psi} &0\\0 &1 \end{pmatrix}\]
for any consistently-chosen square root of $(e^{i\phi}-1)/a'$, so by a second change of basis we may assume without loss of generality that
\[A_1=\begin{pmatrix} e^{i\phi} &e^{i\phi}-1 \\0 &1 \end{pmatrix},\qquad A_2=\begin{pmatrix} e^{i\psi} &0 \\0 &1 \end{pmatrix}.\]
We observe that for every $n,m \in \Z$ 
\[A_1^n = \begin{pmatrix} e^{in\phi} &e^{in\phi}-1 \\0 &1 \end{pmatrix},\qquad A_2^m = \begin{pmatrix} e^{im\psi} &0 \\0 &1 \end{pmatrix}.\]
Choose integers $q_1,q_2 \geq 2$ such that $e^{q_1 i \phi}=e^{q_2 i \psi}=1$ and observe that $A_1^{q_1}=A_2^{q_2}=I$, so in particular $A_1^{q_1-1}=A_1^{-1}$ and $A_2^{q_2-1}=A_2^{-1}$. We directly calculate
\begin{align*}A_1A_2A_1^{-1}A_2^{-1}&= \begin{pmatrix} e^{i\phi} &e^{i\phi}-1 \\0 &1 \end{pmatrix}\begin{pmatrix} e^{i\psi} &0 \\0 &1 \end{pmatrix}\begin{pmatrix} e^{-i\phi} &e^{-i\phi}-1 \\0 &1 \end{pmatrix}\begin{pmatrix} e^{-i\psi} &0 \\0 &1 \end{pmatrix} \\
&= \begin{pmatrix} e^{i(\phi+\psi)} &e^{i\phi}-1 \\0 &1 \end{pmatrix}\begin{pmatrix} e^{-i(\phi+\psi)} &e^{-i\phi}-1 \\0 &1 \end{pmatrix} \\
&=\begin{pmatrix} 1 & e^{i\psi}-e^{i(\phi+\psi)}+e^{i\phi}-1\\ 0&1\end{pmatrix}\\
&=\begin{pmatrix}1 & (1-e^{i\phi})(e^{i\psi}-1)\\0&1\end{pmatrix}\end{align*}
and thus $A_1A_2A_1^{q_1-1}A_2^{q_2-1}=A_1A_2A_1^{-1}A_2^{-1}$ is a nontrivial Jordan matrix since the upper-right entry is necessarily nonzero. If we define $\sigma \colon \N \to \{1,2\}$ to be the periodic switching signal with period $q_1+q_2$ such that $\sigma(j)=2$ for $1 \leq j \leq q_2-1$, $\sigma(j)=1$ for $q_2 \leq j \leq q_1+q_2-2$, $\sigma(q_1+q_2-1)=2$ and $\sigma(q_1+q_2)=1$ then it is clear that
\[\liminf_{n \to \infty}\frac{1}{n}\left\|A_{\sigma(n)}\cdots A_{\sigma(1)}v\right\| >0\]
where $v \in \C^2$ is any vector with nonzero second co-ordinate. This concludes the proof of the proposition in the case where $e^{i\phi}$ and $e^{i\psi}$ are both roots of unity.

{\bf{Second case.}} We now consider the case in which neither $e^{i\phi}$ nor $e^{i\psi}$ is a root of unity. In this case we make an identical basis transformation so that 
\[A_1=\begin{pmatrix} e^{i\phi} &e^{i\phi}-1 \\0 &1 \end{pmatrix},\qquad A_2=\begin{pmatrix} e^{i\psi} &0 \\0 &1 \end{pmatrix}\]
and so that we again have for every $n,m \in \Z$
\[A_1^n = \begin{pmatrix} e^{in\phi} &e^{in\phi}-1 \\0 &1 \end{pmatrix},\qquad  A_2^m = \begin{pmatrix} e^{im\psi} &0 \\0 &1 \end{pmatrix}.\]
To obtain the conclusion we will prove the following claim: there exists $M\geq 1$ such that for every $\alpha \in \C$, there exist integers $n,m$ in the range $1 \leq n,m\leq M$ such that the vector
\[\begin{pmatrix} \beta \\ 1\end{pmatrix} :=A_1^n A_2^m \begin{pmatrix} \alpha\\1\end{pmatrix} \]
satisfies $|\beta| \geq |\alpha|+1$. Once this has been proved it follows immediately that for any initial vector $v \in \C^2$ with second co-ordinate equal to $1$, we may apply the claim inductively to construct a switching sequence  $\sigma \colon \N \to \{1,2\}$ which satisfies $\|A_{\sigma(n_k)} \cdots A_{\sigma(1)}v\| \geq k$ along a sequence of times $(n_k)_{k=1}^\infty$ such that $n_1=0$ and $2 \leq n_{k+1}-n_k \leq 2M$ for every $k \geq 1$. Given any integer $m \geq 1$, choose the largest possible $k\geq 1$ such that $n_k \leq m$; we have $0 \leq m-n_k <n_{k+1}-n_k \leq 2M$ and $m < n_{k+1} \leq 2Mk$, so
\begin{align*}\left\|A_{\sigma(m)} \cdots A_{\sigma(1)}v\right\| &\geq \left\|\left(A_{\sigma(m)}\cdots A_{\sigma(n_k+1)}\right)^{-1}\right\|^{-1}\cdot  \left\|A_{\sigma(n_k)}\cdots A_{\sigma(1)}v\right\|\\
&\geq \left(\min\left\{\left\|A_1^{-1}\right\|^{-1}, \left\|A_2^{-1}\right\|^{-1}\right\}\right)^{n_k-m} \cdot k \\
&\geq \frac{\left(\min\left\{\left\|A_1^{-1}\right\|^{-1}, \left\|A_2^{-1}\right\|^{-1}\right\}\right)^{-(2M-1)}}{2M} \cdot m.\end{align*}
Thus
\[\liminf_{m \to \infty}\frac{1}{m} \|A_{\sigma(m)}\cdots A_{\sigma(1)}v \| >0\]
and the conclusion of the proposition follows from the validity of the claim.

Let us therefore prove the claim. By Kronecker's theorem, $\{e^{in\phi} \colon n\in \N\}$ and $\{e^{in\psi} \colon n \in \N\}$ are both dense in the unit circle in $\C$. It follows that there exists $M \geq 1$ such that both of the sets $\{e^{in\phi} \colon 1 \leq n \leq M \}$ and $\{e^{in\psi} \colon 1 \leq n \leq M\}$ intersect every arc in the unit circle whose angular length is at least $2\pi/3$. Given $\alpha=r e^{i\theta} \in \C$, choose $m$ such that $1 \leq m \leq M$ and such that
\[e^{im\psi} \in \left\{e^{i \vartheta} \colon \vartheta \in \left(-\theta - \frac{\pi}{3}, -\theta+\frac{\pi}{3}\right)\right\}\]
and then choose $n$ such that $1 \leq n \leq M$ and such that
\[e^{in\phi} \in \left\{e^{i \vartheta} \colon \vartheta \in \left(-\theta-m\psi + \frac{2\pi}{3}, -\theta-m\psi+\frac{4\pi}{3}\right)\right\}.\]
We then have
\[e^{i(m\psi+\theta)} \in \left\{e^{i \vartheta} \colon \vartheta \in \left( - \frac{\pi}{3}, \frac{\pi}{3}\right)\right\} \]
and
\[e^{i(m\psi+n\phi+\theta)} \in \left\{e^{i \vartheta} \colon \vartheta \in \left(  \frac{2\pi}{3}, \frac{4\pi}{3}\right)\right\} \]
so that
\[\Re\left(e^{-i(m\psi+\theta)}\right)= \Re\left(e^{i(m\psi+\theta)}\right)=\cos(m\psi+\theta)  >\frac{1}{2},\]
and
\[\Re\left(e^{-i(m\psi+n\phi+\theta)}\right)=\Re\left(e^{i(m\psi+n\phi+\theta)}\right)=\cos(m\psi+n\phi+\theta)<-\frac{1}{2}.\]
We calculate that
\begin{align*}\begin{pmatrix}\beta\\1\end{pmatrix}&=A_1^n A_2^m \begin{pmatrix}re^{i\theta}\\1\end{pmatrix}\\
 &= \begin{pmatrix} e^{in\phi} &e^{in\phi}-1 \\0 &1 \end{pmatrix} \begin{pmatrix} e^{im\psi} &0 \\0 &1 \end{pmatrix}\begin{pmatrix}re^{i\theta}\\1\end{pmatrix}=\begin{pmatrix}e^{in \phi}(1+re^{i(m\psi+\theta)})-1
\\1
\end{pmatrix}\end{align*}
and since
\begin{align*}|\beta|=\left|e^{in \phi}\left(1+re^{i(m\psi+\theta)}\right)-1\right|&=\left|r+e^{-i(m\psi+\theta)}-e^{-i(m\psi+n\phi+\theta)}\right|\\
& \geq \Re\left(r+e^{-i(m\psi+\theta)}-e^{-i(m\psi+n\phi+\theta)}\right)\\
&=r+\Re\left(e^{-i(m\psi+\theta)}\right) -\Re\left(e^{-i(m\psi+n\phi+\theta)}\right)\\
&>r+1\\
&=|\alpha|+1\end{align*}
this proves the claim. The conclusion of the proposition in the second case now follows by inductive application of the claim.

{\bf{Third case.}} It remains only to treat the case in which one of $e^{i\phi}$ and $e^{i\psi}$ is a root of unity and the other is not. Using the symmetry between the two matrices
\[A_1=\begin{pmatrix} e^{i\phi} & a\\ 0&1\end{pmatrix},\qquad A_2=\begin{pmatrix} e^{i\psi} & b\\ 0&1\end{pmatrix},\]
by swapping the labels of the two matrices if necessary we may without loss of generality assume that $e^{i\psi}$ is a root of unity and that $e^{i\phi}$ is not. But then the two matrices
\[B_1:=A_1=\begin{pmatrix} e^{i\phi} & a\\ 0&1\end{pmatrix},\qquad B_2:=A_1A_2=\begin{pmatrix} e^{i(\psi+\phi)} & e^{i\phi}b+a\\ 0&1\end{pmatrix}\]
are both upper-triangular with lower-right entry equal to $1$ and with upper-left entry lying on the unit circle but not a root of unity. We observe that $B_1$ and $B_2$ cannot be simultaneously diagonalised, since if they were diagonal in some basis then both $A_1=B_1$ and $A_2=A_1^{-1}A_1A_2=B_1^{-1}B_2$ would be diagonal in that basis which contradicts the hypotheses of the proposition. We may therefore apply the already-proved second case of the proposition to $B_1$ and $B_2$ to obtain a switching sequence $\hat\sigma \colon \N \to \{1,2\}$ and vector $v \in \C^2$ such that
\[\liminf_{n \to \infty} \frac{1}{n}\left\|B_{\hat\sigma(n)}\cdots B_{\hat\sigma(1)}v\right\| >0.\]
By replacing every instance of $2$ in the switching sequence $\hat\sigma$ with an instance of $2$ followed by an additional instance of $1$, it is clear that we may define a new switching sequence $\sigma \colon \N \to \{1,2\}$ and sequence of times $(n_k)_{k=1}^\infty$ such that $n_1 \in \{1,2\}$ and such that we have both $A_{\sigma(n_k)}\cdots A_{\sigma(1)}v= B_{\sigma(k)}\cdots B_{\sigma(1)}v$ and $1 \leq n_{k+1}-n_k \leq 2$ for every $k \geq 1$. In particular $k \leq n_k \leq 2k$ for every $k \geq 1$ and it follows easily that 
\[\liminf_{n \to \infty} \frac{1}{n} \left\|A_{\sigma(n)}\cdots A_{\sigma(1)}v\right\| \geq \frac{\left\|A_1^{-1}\right\|^{-1}}{2} \cdot \liminf_{n \to \infty} \frac{1}{n}\left\|B_{\hat\sigma(n)}\cdots B_{\hat\sigma(1)}v\right\| >0.\]
We have proved the proposition in the third case, and the proof of the proposition is now complete.
\end{proof}

\subsection{The proof of Theorem \ref{th:main}}

We may now prove Theorem \ref{th:main}. Let $A_1,\ldots,A_N$ be $2\times 2$ complex matrices and suppose that their joint spectral radius is equal to $1$. If $A_1,\ldots,A_N$ do not have a common eigenspace then 
\[\sup_{n \geq 1} \max_{i_1,\ldots,i_n \in \{1,\ldots,N\}} \left\|A_{i_n}\cdots A_{i_1}\right\|<\infty\]
by Proposition \ref{pr:irr} as required, so for the remainder of the proof we assume the existence of a common eigenvector $v_1 \in \C^2$. 

Suppose that  $\{A_j \colon |\det A_j|=1\}$ is simultaneously diagonalisable. If this set is nonempty let $v_2 \in \C^2$ be a common eigenvector of these matrices which is not proportional to $v_1$; if the set is empty, choose arbitrarily a vector $v_2 \in \C^2$ which is not proportional to $v_1$. In the basis $(v_1,v_2)$ for $\C^2$ the matrices $A_1,\ldots,A_N$ are all upper triangular. Their diagonal entries are their eigenvalues, and it follows by Gelfand's formula that the absolute value of their eigenvalues is bounded above by the joint spectral radius of $A_1,\ldots,A_N$, which is $1$. Thus  the diagonal entries of the matrices $A_1,\ldots,A_N$ are all less than or equal to $1$ in absolute value. If there exists at least one matrix $A_j$ such that $|\det A_j|<1$, let $\lambda \in (0,1)$ be the maximum of $\sqrt{|\det A_j|}$ among all such matrices, and if no such matrix exists let $\lambda \in (0,1)$ be arbitrary. Let $M\geq 0$ be the maximum of the absolute values of the off-diagonal entries of the matrices $A_1,\ldots,A_N$. For every $A_j$ such that $|\det A_j|<1$, one of the diagonal entries must have absolute value less than or equal to $\lambda$ and the other clearly has absolute value less than or equal to $1$; and the entry above the diagonal has absolute value at most $M$. On the other hand every $A_j$ such that $|\det A_j|=1$ is diagonal in the basis $(v_1,v_2)$ and has both nonzero entries bounded above by $1$ in absolute value.  The hypotheses of Proposition \ref{pr:bdd} are therefore satisfied by $A_1,\ldots,A_N$ and consequently
\[\sup_{n \geq 1} \max_{i_1,\ldots,i_n \in \{1,\ldots,N\}} \left\|A_{i_n}\cdots A_{i_1}\right\|<\infty\]
as required.

Finally suppose that $\{A_j \colon |\det A_j|=1\}$ is not simultaneously diagonalisable. Choose an orthonormal basis $(u_1,u_2)$ for $\C^2$ in which $u_1$ is proportional to $v_1$. Since this basis is related to the standard basis by a unitary transformation, in this basis the norms $\|A_j\|$ all take their original values. Clearly in this basis every $A_j$ is upper triangular. 
If there exists $A_j$ such that $|\det A_j|=1$ and $A_j$ is not diagonalisable then $A_j$ is a nontrivial Jordan matrix and we trivially have
\[\lim_{n \to \infty} \frac{1}{n}\left\|A^n_jv\right\|>0\]
for every $v \in \C^2$ not proportional to $v_1$. Otherwise, every $A_j$ which satisfies $|\det A_j|=1$ is diagonalisable.  It is easily seen that if every pair of matrices $A_k$, $A_\ell$ satisfying $|\det A_k|=|\det A_\ell|$ can be simultaneously diagonalised then all matrices $A_j$ such that $|\det A_j|=1$ can be diagonalised simultaneously, which we know is not the case, so there must exist two matrices $A_k$, $A_\ell$ satisfying $|\det A_k|=|\det A_\ell|$ which are not simultaneously diagonalisable. 
The spectral radii of these two matrices are at most $1$ by earlier reasoning and are therefore exactly $1$ by consideration of the determinant. We may therefore apply Proposition \ref{pr:tr} to $A_k$ and $A_\ell$ to conclude that there exist a switching sequence $\hat\sigma \colon \N \to \{k,\ell\}$ and vector $v \in \C^2$ such that
\[\liminf_{n \to \infty} \frac{1}{n}\|A_{\hat\sigma(n)} \cdots A_{\hat\sigma(1)}v\|>0.\]
To complete the proof of the theorem we must show that
\begin{equation}\label{eq:fbd}\max_{i_1,\ldots,i_n \in \{1,\ldots,N\}} \left\|A_{i_1}\cdots A_{i_n}\right\| \leq  1+n \cdot \max_j \|A_j\|\end{equation}
for every $n \geq 1$. We argue similarly to the proof of Proposition \ref{pr:bdd}. Let us write
\[A_j=\begin{pmatrix} a_j & b_j \\ 0&c_j\end{pmatrix}\]
for every $j=1,\ldots,N$. For every $n \geq 1$ and $j_1,\ldots,j_n$ we clearly have
\begin{align*}\left\|A_{j_n}\cdots A_{j_1}\right\|&=\left\|\begin{pmatrix} a_{j_n}&b_{j_n}\\ 0&c_{j_n} \end{pmatrix}\cdots \begin{pmatrix} a_{j_1}&b_{j_1}\\ 0&c_{j_1} \end{pmatrix}\right\|\\
& =\left\|\begin{pmatrix} a_{j_n}\cdots a_{j_1}&\sum_{\ell=1}^n a_{j_n}\cdots a_{j_{\ell+1}} b_{j_\ell} c_{j_{\ell-1}}\cdots c_{j_1} \\ 0&c_{j_n}\cdots c_{j_1} \end{pmatrix}\right\|\\
& =\left\|\begin{pmatrix} |a_{j_n}\cdots a_{j_1}|&\left|\sum_{\ell=1}^n a_{j_n}\cdots a_{j_{\ell+1}} b_{j_\ell} c_{j_{\ell-1}}\cdots c_{j_1}\right| \\ 0&|c_{j_n}\cdots c_{j_1}| \end{pmatrix}\right\|\\
& \leq\left\|\begin{pmatrix} |a_{j_n}\cdots a_{j_1}|&\sum_{\ell=1}^n |a_{j_n}\cdots a_{j_{\ell+1}} b_{j_\ell} c_{j_{\ell-1}}\cdots c_{j_1}| \\ 0&|c_{j_n}\cdots c_{j_1}| \end{pmatrix}\right\|\\
& \leq\left\|\begin{pmatrix} 1 &\sum_{\ell=1}^n |b_{j_\ell}| \\ 0&1 \end{pmatrix}\right\|\\
& \leq\left\|\begin{pmatrix} 1 &0\\ 0&1 \end{pmatrix}\right\|+\left\|\begin{pmatrix} 0 &\sum_{\ell=1}^n |b_{j_\ell}| \\ 0&0 \end{pmatrix}\right\|\\
&\leq1+n \cdot \max_j |b_j|\\
&\leq1+n \cdot \max_j \|A_j\|\\
\end{align*}
where we have used Lemma \ref{le:only} and the fact that $\max_j \max\{|a_j|,|c_j|\}\leq 1$. The inequality \eqref{eq:fbd} is proved and we have completed the proof of the theorem.

\section{Acknowledgements}

This work was partially supported by Leverhulme Trust Research Project Grant RPG-2016-194.

\bibliographystyle{acm}
\bibliography{persimmon}

\end{document}